\DeclareMathOperator{\E}{\mathbb{E}}
\newcommand{\Nc}{\mathcal{N}}
\newcommand{\Oc}{\mathcal{O}}
\newcommand{\R}{\mathbb{R}}
\newcommand{\eps}{\varepsilon}
\newcommand{\F}{\mathcal{F}}
\newcommand{\x}{\boldsymbol{x}}
\newcommand{\y}{\boldsymbol{y}}
\newcommand{\z}{\boldsymbol{z}}
\newcommand{\step}{\eta}
\newcommand{\fuj}{f_j^u}
\newcommand{\nbli}{\nabla^i}
\newcommand{\Dj}{D_j}
\newcommand{\zi}{\boldsymbol{z}^i}
\newcommand{\tauijt}{\tau_j^i(t)}
\newcommand{\grad}{\boldsymbol{g}}
\newcommand{\Ft}{\F_\tau}
\newcommand{\Rdi}{\R^{d_i}}
\newtheorem{theorem}{Theorem}
\newtheorem{assumption}{Assumption}
\newtheorem{lemma}{Lemma}
\begin{document}
\title{Zeroth-Order {Non-Convex} Optimization for Cooperative Multi-Agent Systems with Diminishing Step Size and Smoothing Radius} 
\author{Xinran Zheng, Tara Javidi, and Behrouz Touri\thanks{This research is supported by NSF grant CNS-2148313, AutoCOMBOT MURI Grant, and AFOSR FA9550-23-1-0057, emails: \{x7zheng, tjavidi, btouri\}@ucsd.edu}\\
{\small {Electrical and Computer Engineering Department, University of California San Diego }}}
\maketitle
\begin{abstract}
We study a class of zeroth-order distributed optimization problems, where each agent can control a partial vector and observe a local cost that depends on the joint vector of all agents, and the agents can communicate with each other with time delay. We propose and study a gradient descent-based algorithm using two-point gradient estimators with diminishing smoothing parameters and diminishing step-size and we establish the convergence rate to a first-order stationary point for general nonconvex problems. A byproduct of our proposed method with diminishing step size and smoothing parameters, as opposed to the fixed-parameter scheme, is that our proposed algorithm does not require any information regarding the local cost functions. This makes the solution appealing in practice as it allows for optimizing an unknown (black-box) global function without prior knowledge of its smoothness parameters. At the same time, the performance will adaptively match the problem instance parameters.
\end{abstract}

\thispagestyle{empty}
\pagestyle{empty}
\section{Introduction}
Distributed optimization has found broad applications, such as distributed learning \cite{omidshafiei2017deep}, network source allocation \cite{tychogiorgos2013non}, and wind farm control \cite{marden2013model}. Generally, such problems are related to minimizing the sum $\sum_{i=1}^nf_i(\x)$ of local functions $f_i(\x)$s where $f_i(\x)$ is only known to agent $i$. There are two classes of distributed optimization problems. Extensive research has been done on consensus-based distributed optimization, where each agent controls the full vector $\x$ and needs to achieve consensus with other agents on an optimal or stationary decision vector~\cite{nedic2009distributed,shi2015extra,tatarenko2017non,zeng2018nonconvex,Reisizadeh2022}. This paper investigates the other class of problems, which we call cooperative multi-agent systems. In these problems, each agent can only partially control the decision vector, but the local cost $f_i$ observed by agent $i$ reflects the impact of all agents' decisions.

In many real-world situations, objective functions are available only as the output of a black-box, or the relationship between the objective function and the variables is so complicated that the direct calculation of the derivatives could be expensive or infeasible. This has led to a research interest in black-box optimization. The well-known Kiefer-Wolfowitz scheme~\cite{kiefer1952stochastic} uses $2d$ ($d$ is the dimension of the variable) function evaluations to construct a gradient estimator, but this does not scale up with high-dimensional problems. Therefore, many existing works propose and analyze two-point~\cite{nesterov2017random} and single-point~\cite{flaxman2004online} gradient estimators and show that the convergence rates of two-point estimator algorithms are comparable to their first-order counterparts~\cite{nesterov2017random}.

Recently zeroth-order methods have been investigated to solve cooperative multi-agent problems. For example, \cite{tang2023zeroth} and~\cite{tang2023zerothjournal} proposed algorithms based on two-point gradient estimator and analyzed the convergence rates in both convex and non-convex, and both noiseless and noisy settings. In~\cite{shen2021asynchronous} an asynchronous method is proposed  based on single-point gradient estimator and showed that their method outperforms two-point methods under asynchronous updating. However, both papers use constant step-size and smoothing radius that depend on the total number of iterations and the smoothness and Lipschitz parameters of objective functions, which might not be known in practical applications.

Inspired by \cite{tang2023zeroth}, we analyze a cooperative multi-agent optimization problem with a (slightly) more generalized communication scheme between agents. As opposed to~\cite{tang2023zeroth}, we propose to use diminishing step-size and smoothing radius schedules{, which have been used in a similar manner in \cite{drusvyatskiy2022improved} and \cite{bravo2018bandit}}. This leads to convergence guarantees without any prior knowledge of the parameters of objective functions (which are required in~\cite{tang2023zeroth}). This also establishes a rigorous asymptotic bound that was not established previously. We demonstrate that for a class of diminishing step-sizes and smoothing radius, our algorithm can obtain $\Oc(T^{-(\frac{1}{2}-\eps)})$ convergence rate, for  sufficiently small $\eps>0$.\\
\textbf{Notations}. {We denote the set of real numbers by $\R$ and the vector space of $d$-dimensional real-valued row vectors by $\R^d$. We use bold letters to denote row vectors. We use $I_d$ to denote the identity matrix of dimension $d$. We use $\Nc(0, \boldsymbol{\Sigma})$ to denote the multi-dimensional Gaussian distribution with zero mean and covariance $\boldsymbol{\Sigma}$.} We use $[n]$ to denote $\{1, \ldots, n\}$. We use $\| \cdot \|$ to denote the standard Euclidean norm, and $\langle\cdot,\cdot\rangle$ to denote the standard Euclidean inner product. 

\section{Problem Formulation}
We consider a cooperative multi-agent optimization problem among $n$ agents, where the agents seek to find
\begin{align}\label{eqn:cooperative}
    \min_{\x^i \in \Rdi} f(\x^1,\ldots,\x^n):=\frac{1}{n}\sum_{i=1}^n f_i(\x^1,\ldots,\x^n).
\end{align}
We assume that $\x^i \in \R^{d_i}$ is the decision vector controlled by agent $i\in[n]$, and $f_i: \R^{d_1} \times \cdots \times \R^{d_n} \rightarrow \R$ is the local cost function observed by agent $i\in[n]$.


In this paper, we consider the setting that each agent $i\in[n]$ can only access the (noiseless) value of its local cost function $f_i$, and the information regarding higher order derivatives of $f_i$ is unavailable to the agent. In addition, agents are able to exchange information over a possibly time-varying network. Therefore, we assume that the exchange of information incurs a delay. {More precisely, for any two agents $i,j \in [n]$, when agent $i$ receives some information from agent $j$ (with some delay), it also receives a time stamp that denotes the time when agent $j$ sends this information.} 

\section{The Main Results}
In this section, we introduce our algorithm to solve problem (\ref{eqn:cooperative}). Our algorithm operates on the discrete-time instants $t=0,1,2,\ldots$. At $t=0$, each agent initializes a guess at an arbitrary point $\x^i(0)=\x^i_0\in \Rdi$. At time $t=0,1,2,\ldots$, each agent $i\in[n]$ generates a random vector $\zi(t) \sim \Nc (0,I_{d_i})$, adjusts its decision vector to be ${\x^i(t)+u(t)\zi(t)}${, where $u(t)$ is a positive scalar we refer to as \textit{smoothing radius}}. Once, every agent does that, agent $i\in[n]$ observes the corresponding local cost $f_i\left(\x(t)+u(t)\z(t)\right)$, where $\x(t)$, $\z(t)$ are the concatenated vectors $(\x^1(t),\ldots,\x^n(t))$, $(\z^1(t),\ldots,\z^n(t))$, respectively. Then, in the same way, each agent $i$ adjusts its vector to be $\x^i(t)-u(t)\zi(t)$, and observes the corresponding local cost $f_i(\x(t)-u(t)\z(t))$. Then, using these two observations,  the agent $i$ computes the approximate derivative of $f_i(\x(t))$ along the vector $\z(t)$
\begin{align*}
    D_i(t)=\frac{f_i(\x(t)+u(t)\z(t))-f_i(\x(t)-u(t)\z(t))}{2u(t)}
\end{align*}
and sends it {out}. At the same time, each agent $i$ may receive the derivative information of agent $j \in [n]$, and only stores the latest  $\Dj(t)$ and the latest time stamp from each agent $j$. To be specific, at each time $t$, we denote the latest time $s$ that $\Dj(s)$ (calculated by agent $j$) is received by agent $i$ by time $t$ to be $\tauijt$. Thus, $t-\tauijt \geq 0$ represents  the communication time delay between agent $j$ and $i$. If agent $i$ has not received any $\Dj$ from agent $j$, we denote $\tauijt=-1$. For $i=j$, we have $\tauijt=t$. Then, each agent $i$ computes a partial gradient estimator as
\begin{align*}
    \grad^i(t)=\frac{1}{n}\sum_{j:\tauijt \geq 0}{\Dj(\tauijt)\zi(\tauijt)}.
\end{align*}
Finally, each agent $i$ updates its decision vector as
\begin{align}\label{eqn:maindyn}
    \x^i(t+1)=\x^i(t)-\step(t)\grad^i(t).
\end{align}


\subsection{Analysis}
Here, we present our main result concerning  the convergence guarantee for our algorithm.
In order to do so, we need to assume some regularity conditions and assumptions on various objects in our work. First, we discuss the assumptions on local cost functions $f_i(\cdot)$.
\begin{assumption}[Assumption on Objective Functions]
\label{asm:func}
    We assume ${f^*:=\inf_{\x \in \R^d} f(\x)>-\infty}$. Further, for $i\in [n]$, $f_i: \R^d \rightarrow \R$ is $G$-Lipschitz and $L$-smooth, i.e.,
    \begin{align}\label{eqn:lipschitz}
        |f_i(\x)-f_i(\y)| &\leq G\| \x-\y \|,\\\label{eqn:smooth}
        \|\nabla f_i(\x)-\nabla f_i(\y)\| &\leq L\| \x-\y \|,
    \end{align}
    for all $\x,\y \in \R^d$. 
\end{assumption}
{We may comment that the above assumptions can be replaced by requiring that the trajectories of the dynamics are bounded in a set $E$, and the $f_i$s satisfy the above conditions only on the set $E$. }

The following is the assumption on the communication delay between the agents. 
\begin{assumption}[Assumption on Communication Delay]
\label{asm:delay}
    {The communication delay between agents $i,j\in[n]$ is a uniformly bounded non-negative integer, i.e., $0 \leq t-\tauijt \leq B$ for some constant $B$.}
\end{assumption}

Note that Assumption \ref{asm:delay} subsumes the assumption on time delay in \cite{tang2023zeroth}, where information is flooding over a (time-invarying) network connecting all agents and there, the time delay between each pair of agents relates to their distance over the network. {Here, the network can be time-varying and we only need bounded delay $B$ between any two nodes when the information is flooded over the network. This is  equivalent to the strongly $B$-connected assumption that is commonly assumed in the distributed optimization literature (see e.g. \cite{nedic2009distributed,tatarenko2017non}).}

Finally, we discuss the assumption regarding the step-size sequences and the smoothing radius sequences. 
\begin{assumption}[Assumption on Step-size Sequences]
\label{asm:step}
    The step-size $\step(t)$ and the smoothing radius $u(t)$ take the form $\step(t)=\frac{\step_0}{(t+1)^\alpha}$, and $u(t)=\frac{u_0}{(t+1)^\beta}$, respectively, where $\step_0 > 0$, $u_0 > 0$, $\alpha$ and $\beta$ are positive exponents satisfying $0<\alpha<1$ and $\beta>0$.
\end{assumption}

{A practical and important caveat here is that in practice, if $u(t)$ is too small, noise could dominate the function difference, which leads to a bad approximation of the gradient \cite{liu2020primer}.}

We are now ready to present our main result that characterizes the convergence rate of our algorithm.

\begin{theorem}\label{Theorem1}
    Under assumptions \ref{asm:func}-\ref{asm:step},
    \begin{align*}
        \lim_{T\to\infty}\frac{1}{T+1}\sum_{t=0}^{T}{\E[\|\nabla f(\x(t))\|^2]}=0.
    \end{align*}
    In particular, if $\alpha=\frac{1}{2}$, $\beta=\frac{1}{4}$, for any $0\!<\!\eps\!<\frac{1}{2}$, we have
    \begin{align*}
        \frac{1}{T+1}\sum_{t=0}^{T}{\E[\|\nabla f(\x(t))\|^2]}=\Oc (T^{-(\frac{1}{2}-\eps)}).
    \end{align*}
    
\end{theorem}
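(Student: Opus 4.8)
The plan is to run the standard descent-lemma argument for nonconvex stochastic gradient methods, but with the gradient estimator decomposed so as to isolate three distinct error sources: the Gaussian-smoothing bias, the displacement caused by communication delay, and a zero-mean noise term. First I would use that $f=\frac1n\sum_i f_i$ inherits $L$-smoothness from the $f_i$ and write the per-step inequality
\[
f(\x(t+1)) \le f(\x(t)) - \step(t)\langle\nabla f(\x(t)),\grad(t)\rangle + \tfrac{L}{2}\step(t)^2\|\grad(t)\|^2,
\]
where $\grad(t)$ is the concatenation of the $\grad^i(t)$, so that $\langle\nabla f(\x(t)),\grad(t)\rangle=\sum_i\langle\nabla_{\x^i}f(\x(t)),\grad^i(t)\rangle$. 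The Lipschitz bound gives $|\Dj(s)|\le G\|\z(s)\|$, from which the two-point estimator has a uniformly bounded second moment $\E[\|\grad(t)\|^2]\le M=\Oc(G^2 d^2)$, independent of $t$ and of $u(t)$ (the smoothing radius cancels); this is what makes the $\tfrac{L}{2}\step(t)^2\|\grad(t)\|^2$ term negligible after normalization by $\sum_t\step(t)$.

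Next I would analyze the inner-product term. By Stein's identity for $\z\sim\Nc(0,I)$, the delay-free symmetric estimator is unbiased for the gradient of the smoothed surrogate $f_u(\x):=\E_\z[f(\x+u\z)]$, and the standard smoothing estimate gives $\|\nabla f_u(\x)-\nabla f(\x)\|=\Oc(uLd)$. Writing $s_j:=\tauijt\in[t-B,t]$, the delayed, cross-agent estimator decomposes as $\grad^i(t)=\frac1n\sum_j \nabla_{\x^i}(f_j)_{u(s_j)}(\x(s_j)) + N^i(t)$ with $N^i(t)$ conditionally zero-mean. Expanding $\langle\nabla f(\x(t)),\grad(t)\rangle=\|\nabla f(\x(t))\|^2+\langle\nabla f(\x(t)),\grad(t)-\nabla f(\x(t))\rangle$ and applying Cauchy--Schwarz with Young's inequality, I would bound the bias and delay contributions: the smoothing bias contributes $\Oc(u(t)^2L^2d^2)$, while the delay displacement $\|\x(s_j)-\x(t)\|\le\sum_{\tau=t-B}^{t-1}\step(\tau)\|\grad(\tau)\|$ is controlled through $L$-smoothness and the uniform bound $M$, using Assumption~\ref{asm:delay}, to be $\Oc(\step(t)^2)$ in expectation.

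The main obstacle is the noise term $\E[\langle\nabla f(\x(t)),N^i(t)\rangle]$. For the self-component $j=i$ we have $s_i=t$, so $\nabla f(\x(t))$ is independent of the fresh draw $\z(t)$ and the term vanishes exactly after conditioning on $\mathcal{F}_t=\sigma(\z(0),\dots,\z(t-1))$. For $j\neq i$ the estimator reuses a past perturbation $\z(s_j)$ on which $\x(t)$, and hence $\nabla f(\x(t))$, also depends, so the term is not exactly zero. The key observation is that this dependence is channeled through the single update at time $s_j$, of magnitude $\Oc(\step(s_j))$; coupling $\x(t)$ with an auxiliary trajectory in which $\z(s_j)$ is frozen and invoking $L$-smoothness yields $|\E[\langle\nabla f(\x(t)),N^i(t)\rangle]|=\Oc(\step(s_j))=\Oc(\step(t))$, turning this cross term into a second-order $\Oc(\step(t)^2)$ contribution. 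I expect this weak-correlation estimate to be the technically delicate step, since the ``latest received'' semantics can reuse a given $\z(s_j)$ across several $t$.

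Finally I would sum from $t=0$ to $T$, absorb the quadratic slack from Young's inequality into the left side, and rearrange to $\tfrac12\sum_{t=0}^T\step(t)\E[\|\nabla f(\x(t))\|^2]\le f(\x(0))-f^* + \Oc(\sum_t\step(t)^2)+\Oc(\sum_t\step(t)u(t)^2)+\Oc(\sum_t\step(t)^3)$. Dividing by $\sum_{t=0}^T\step(t)\sim\frac{\step_0}{1-\alpha}(T+1)^{1-\alpha}$, every right-hand term vanishes as $T\to\infty$ whenever $0<\alpha<1$ and $\beta>0$, giving weighted-average convergence. Because $\step(t)$ is non-increasing and, for polynomial step sizes, $\sum_{t=0}^T\step(t)\asymp(T+1)\step(T)$, the unweighted average is within a constant factor of the weighted one, yielding $\frac{1}{T+1}\sum_t\E[\|\nabla f(\x(t))\|^2]\to 0$. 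For the explicit rate, setting $\alpha=\tfrac12,\beta=\tfrac14$ makes $\sum_t\step(t)^2$ and $\sum_t\step(t)u(t)^2$ both $\Oc(\log T)$ and $\sum_t\step(t)^3=\Oc(1)$, so the bound is $\Oc(T^{-1/2}\log T)$; since $\log T=\Oc(T^{\eps})$ for every $\eps>0$, this is $\Oc(T^{-(\frac12-\eps)})$.
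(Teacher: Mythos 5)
Your proposal is correct and its core --- the descent lemma for $L$-smooth $f$, the decomposition of the estimator error into Gaussian-smoothing bias, delay displacement, and a (nearly) zero-mean noise term, the uniform second-moment bound on $\grad(t)$, and the telescoping sum yielding $\sum_t \step(t)\E[\|\nabla f(\x(t))\|^2] \le C + \Oc(\sum_t\step^2(t)) + \Oc(\sum_t\step(t)u^2(t)) + \Oc(\sum_t\step^3(t))$ --- is the same as the paper's (its Lemmas~\ref{lemma:gradient_estimator}--\ref{lemma:innerproduct_bound3}). Two points differ. First, for the cross term $\E[\langle\nabla f(\x(t)),N^i(t)\rangle]$ you propose a coupling argument with a frozen $\z(\tauijt)$; the paper avoids this delicacy entirely by anchoring the inner product at the delayed iterate, i.e., writing $\langle\nbli f(\x(t)),\Dj\zi\rangle=\langle\nbli f(\x(t))-\nbli f(\x(\tauijt)),\Dj\zi\rangle+\langle\nbli f(\x(\tauijt)),\Dj\zi\rangle$, where the second piece is exactly conditionally unbiased given $\sigma(\x(\tauijt))$ and the first is $\Oc(\step(t))$ by Cauchy--Schwarz and the bounded-delay assumption --- this reaches the same $\Oc(\step(t))$ order you claim, with less machinery, so your flagged ``delicate step'' can simply be replaced. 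Second, and more substantively, your conversion from the step-size-weighted average to the unweighted average uses only monotonicity of $\step(t)$ (namely $\step(t)\ge\step(T)$ for $t\le T$, so $\frac{1}{T+1}\sum_t\E[\|\nabla f(\x(t))\|^2]\le\frac{(T+1)^{\alpha-1}}{\step_0}S(T)$), whereas the paper runs a H\"older-inequality argument with exponent $\theta=\frac{1}{1+\eps/2}$ that costs an extra factor $T^{\eps/2}$. Your route is more elementary and yields the marginally sharper $\Oc(T^{-1/2}\log T)$ directly, which of course still implies the stated $\Oc(T^{-(\frac12-\eps)})$. (Minor quibbles: your dimension constants, e.g.\ $\Oc(G^2d^2)$ for the second moment and $\Oc(uLd)$ for the smoothing bias, are looser in $d$ than the paper's $\Oc(G^2d)$ and $uL\sqrt d$, but this does not affect the $T$-dependence claimed in the theorem.)
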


In a closely related work~\cite{tang2023zeroth},  the step-size and smoothing radius were assumed to be constants given by $\step=\frac{\alpha_\step}{L\sqrt{\Bar{b}\sqrt{n}d}\sqrt{T-B+1}}$, where $G,L$ are the Lipschitz and smoothness parameters of $f_i$s, and the number of iterations $T$ is predetermined.  {They show that for these fixed parameters, given the number of iterations $T>B$, they have $\frac{1}{T-B+1}\sum_{t=B}^{T}{\E[\|\nabla f(\x(t))\|^2]}\leq K \sqrt{\frac{\Bar{b}\sqrt{n}d}{T-B+1}}$ for some constant $K>0$}. Our diminishing step-size approach achieves $\Oc (T^{-(\frac{1}{2}-\eps)})$ where $\eps>0$ can be arbitrarily close to 0. Therefore, we show that using diminishing step-size does not slow down the convergence in a significant manner. However, in our result, we do not need to know any of these parameters to design our step-size and smoothing radius.

{
\section{Numerical Experiment}
Here, we demonstrate the performance of our algorithm on a wind farm power maximization problem studied in \cite{tang2023zeroth,marden2013model}. This example consists of $n=80$ wind turbines, each of which can adjust its own axial induction factor denoted by decision variable $x^i\in \R$. The power generated by turbine $i$, denoted by $f_i$, depends on its own axial induction as well as those of the wind turbines upstream. We denote $\x=(x^1,\ldots,x^n)$, so the wind farm power maximization problem can be written as ${\max_{\x}f(\x)=\frac{1}{n}\sum_{i=1}^n f_i(\x)}$.

In our experiment, we start from an initial point ${\x(0)=(\frac{1}{3},\ldots,\frac{1}{3})}$, and normalize the power $f(\x)$ by the optimal power $f^*=f(\x^*)$, where $\x^*$ is the optimal action profile. We compare the performance of our algorithm with the algorithm using constant $\step$ and $u$ proposed in \cite{tang2023zeroth}. For our algorithm, we choose $\step_0=0.1$, $u_0=0.01$, and two different $(\alpha,\beta)$ pairs $(\alpha,\beta)=(0.4,0.5)$ or $(\alpha,\beta)=(0.51,0.25)$. For constant-step-size algorithm in \cite{tang2023zeroth}, we choose three different values for step size, $\step_1=0.05$, $\step_2=0.01$, $\step_3=0.005$, both with $u=0.001$. We run the algorithm for $T=8000$ iterations and repeat this for 10 trials.

\begin{figure}[h]
    \centering
    \includegraphics[width=0.8\linewidth]{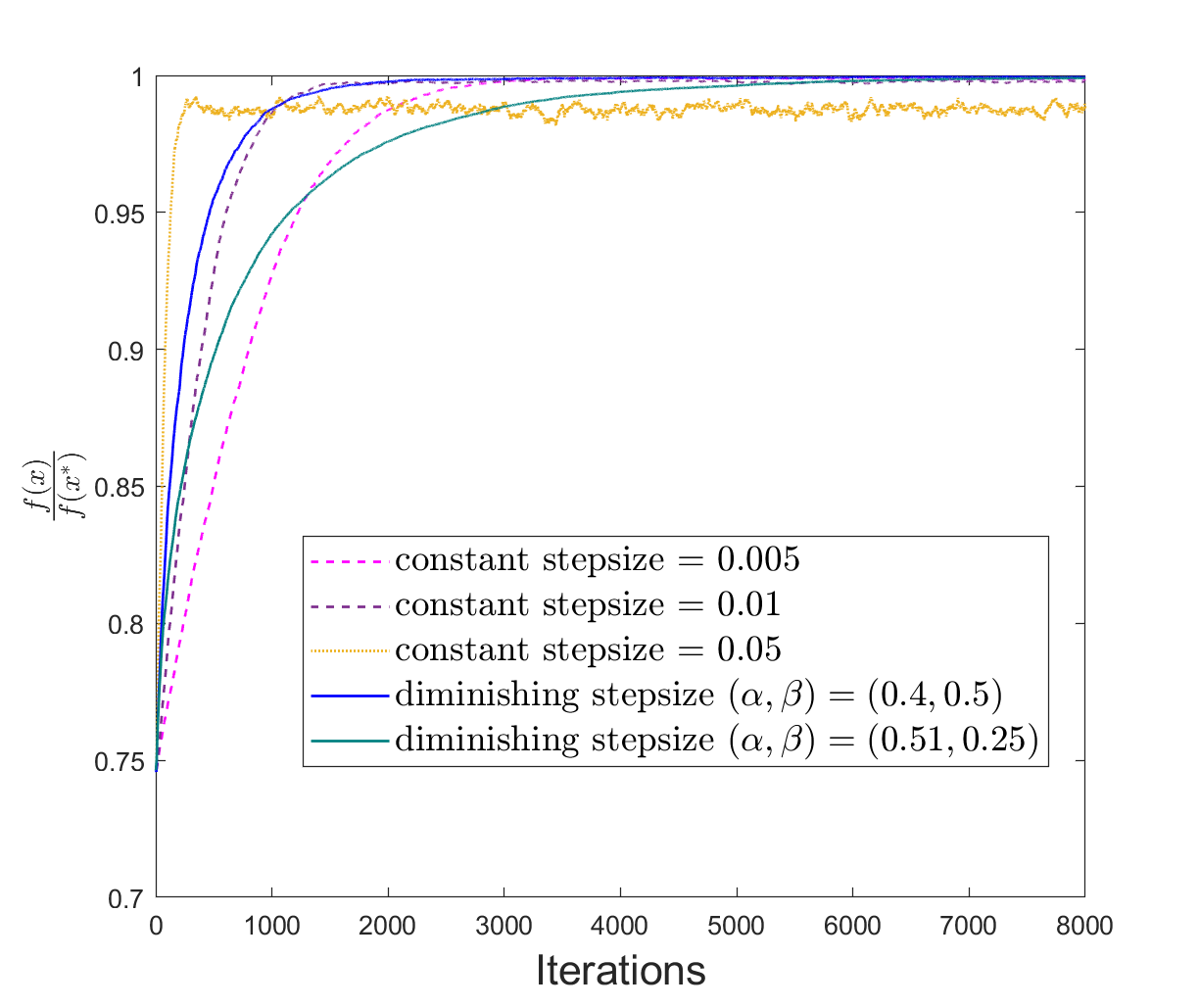}
    \caption{Optimal wind farm power extraction using diminishing step size vs constant step size algorithm.}
    \label{fig:simulation}
\end{figure}

In Figure \ref{fig:simulation}, we note that for a constant step-size algorithm, once we choose a step size $\step$, there exists a $T'$ such that when $t>T'$, $\x(t)$ will stay in a neighborhood of $\x^*$. When the constant step size $\step$ is large, this neighborhood is large. When $\step$ is small, this neighborhood is small, but the convergence to the neighborhood is slow. By contrast, for our diminishing step size algorithm, we observe convergence to the optimal point and choose proper parameters to achieve a balance between accuracy and convergence rate.

}

\section{Proof of Main Result}\label{sec:proof}
To prove the main result, first we establish a recursive inequality for $\E[f(\x(t))]$. Using that, we can bound $\sum_{t=B}^T \step(t)\E[\| \nabla f(\x(t))\|^2]$. Finally, we translate such a bound to a bound for the (empirical) average expected squared norm of the gradients, i.e., $\frac{1}{T+1}\sum_{t=0}^{T}{\E[\|\nabla f(\x(t))\|^2]}$. We establish this using intermediate results as discussed below.

First, we show that our gradient estimator is nearly unbiased and its proof follows from a similar argument as the derivation of (21) in \cite{nesterov2017random} and the proof of Lemma 5(b)-\cite{malik2019derivative}.

\begin{lemma}\label{lemma:gradient_estimator}
    {Consider the \textit{smoothed version} of $f$ and $f_j$ defined by $f^u(\x):=\E_{\z}[f(\x+u\z)]$ and ${\fuj(\x):=\E_{\z}[f_j(\x+u\z)]}$, respectively, where $\z\sim \Nc(0, I_d)$. 
    Then, if $f_j$ is $G$-Lipschitz and $L$-smooth (see Assumption~\ref{asm:func}) for $j\in[n]$, so will be the smoothed function $\fuj:\R^d \rightarrow \R$. Moreover, for each $\x \in \R^d$,
    \begin{align}
    \label{eqn:eqn1_of_lemma:gradient_estimator}
        \E_{\z}\left[\frac{f_j(\x+u\z)-f_j(\x-u\z)}{2u}\z\right]&=\nabla \fuj(\x),\text{ and }\\
    \label{eqn:eqn2_of_lemma:gradient_estimator}
        \| \nabla f(\x)-\nabla f^u(\x)\| &\leq uL\sqrt{d}.
    \end{align}}
\end{lemma}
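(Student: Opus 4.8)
The plan is to treat the three claims in turn, relying throughout on the ability to differentiate under the expectation. First I would establish that $\fuj$ inherits the $G$-Lipschitz and $L$-smooth constants. For the Lipschitz bound, since $\fuj(\x) - \fuj(\y) = \E_{\z}[f_j(\x + u\z) - f_j(\y + u\z)]$, applying the triangle inequality for integrals (Jensen) together with the $G$-Lipschitz property of $f_j$ pointwise gives $|\fuj(\x) - \fuj(\y)| \le \E_{\z}[G\|\x - \y\|] = G\|\x - \y\|$. For smoothness I would first argue that $\nabla \fuj(\x) = \E_{\z}[\nabla f_j(\x + u\z)]$; this interchange of gradient and expectation is justified by dominated convergence, using the uniform bound $\|\nabla f_j\| \le G$ implied by the Lipschitz assumption. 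Granting this, $\nabla \fuj(\x) - \nabla \fuj(\y) = \E_{\z}[\nabla f_j(\x + u\z) - \nabla f_j(\y + u\z)]$, and the same Jensen-plus-$L$-smoothness argument yields $\|\nabla \fuj(\x) - \nabla \fuj(\y)\| \le L\|\x - \y\|$.

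For the unbiasedness identity \eqref{eqn:eqn1_of_lemma:gradient_estimator}, the key tool is Gaussian integration by parts (Stein's identity). Writing $p(\z) = (2\pi)^{-d/2}\exp(-\|\z\|^2/2)$ for the standard Gaussian density, the crucial property is $\nabla p(\z) = -\z\, p(\z)$. I would compute $\nabla \fuj(\x)$ directly from $\int f_j(\x + u\z)\, p(\z)\,d\z$ via the change of variables $\y = \x + u\z$, differentiate the shifted density in $\x$, invoke the Stein relation, and change variables back; this produces $\nabla \fuj(\x) = \frac{1}{u}\E_{\z}[f_j(\x + u\z)\,\z]$. The symmetric form then follows from the reflection symmetry $\z \mapsto -\z$ of the Gaussian: since $\E_{\z}[f_j(\x - u\z)\,\z] = -\E_{\z}[f_j(\x + u\z)\,\z]$, averaging the two shifts gives $\E_{\z}\big[\tfrac{f_j(\x + u\z) - f_j(\x - u\z)}{2u}\,\z\big] = \frac{1}{u}\E_{\z}[f_j(\x + u\z)\,\z] = \nabla \fuj(\x)$.

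Finally, for the approximation bound \eqref{eqn:eqn2_of_lemma:gradient_estimator}, I would again use $\nabla f^u(\x) = \E_{\z}[\nabla f(\x + u\z)]$ (valid for $f$ since $f = \frac{1}{n}\sum_i f_i$ is itself $L$-smooth), so that $\nabla f(\x) - \nabla f^u(\x) = \E_{\z}[\nabla f(\x) - \nabla f(\x + u\z)]$. Taking norms inside the expectation and applying $L$-smoothness yields the pointwise bound $\|\nabla f(\x) - \nabla f(\x + u\z)\| \le uL\|\z\|$, hence $\|\nabla f(\x) - \nabla f^u(\x)\| \le uL\,\E_{\z}[\|\z\|]$. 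The proof closes by noting $\E_{\z}[\|\z\|] \le \sqrt{\E_{\z}[\|\z\|^2]} = \sqrt{d}$ via Jensen's inequality, since $\z \sim \Nc(0, I_d)$ has $\E[\|\z\|^2] = d$.

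The main obstacle — really the only non-routine point — is rigorously justifying the interchange $\nabla \E_{\z}[\,\cdot\,] = \E_{\z}[\nabla(\cdot)]$ and the integration-by-parts step giving $\nabla \fuj(\x) = \frac{1}{u}\E_{\z}[f_j(\x + u\z)\,\z]$; once the Gaussian density's self-differentiation identity and the dominated-convergence hypotheses are in place, the remaining steps are direct applications of the triangle and Jensen inequalities, and everything else is bookkeeping.
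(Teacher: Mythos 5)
Your proposal is correct and follows essentially the same route as the paper, which does not spell out a proof but defers to the derivation of (21) in Nesterov--Spokoiny and Lemma 5(b) of Malik et al.---precisely the Gaussian integration-by-parts (Stein) identity $\nabla \fuj(\x)=\frac{1}{u}\E_{\z}[f_j(\x+u\z)\z]$ combined with the reflection symmetry $\z\mapsto-\z$, the interchange $\nabla\E_{\z}[\cdot]=\E_{\z}[\nabla(\cdot)]$ for the inherited Lipschitz/smoothness constants, and $\E_{\z}[\|\z\|]\le\sqrt{d}$ for the bias bound. All steps, including the dominated-convergence justification and the use of $L$-smoothness of $f=\frac{1}{n}\sum_i f_i$, are sound.
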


Our next intermediate result provides a bound on the second moment of our gradient estimator{, whose proof follows the proof of Lemma 3 in \cite{tang2023zeroth} and Lemma 3 in \cite{tang2023zerothjournal}}.
\begin{lemma}
\label{lemma:2norm_grad_bound}
    Let $\z \sim \Nc(0, I_d)$, and assume $f:\R^d \rightarrow \R$ is $G$-Lipschitz. Then
    \begin{align*}
        \E_{\z} \left[ \left|\frac{f(\x+u\z)-f(\x-u\z)}{2u}z_i\right|^2 \right] \leq 2\sqrt{6}G^2,
    \end{align*}
    for any $i=1,\ldots,d$, where $z_i$ denotes the $i$'th entry of $\z$. {In particular, if for all $j\in[n]$, $f_j$ is $G$-Lipschitz, then for any $t \geq 0$ in the evolution of our algorithm, we have
    \begin{align*}
        \E[\| \Dj(\tauijt)\zi(\tauijt)\|^2] &\leq 2\sqrt{6}G^2d_i,\\
        \E[\| \grad (t) \|^2] &\leq 2\sqrt{6}G^2d.
    \end{align*}
    
    }
\end{lemma}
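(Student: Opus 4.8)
The plan is to reduce both vector inequalities to a single scalar per-coordinate estimate and then obtain that estimate by exploiting concentration, rather than the crude pointwise Lipschitz bound. Write $D(\z):=\frac{f(\x+u\z)-f(\x-u\z)}{2u}$, so the quantity to control is $\E_{\z}[D(\z)^2 z_i^2]$. The two ``in particular'' bounds follow routinely once this is in hand: since $\|\Dj\zi\|^2=\Dj^2\|\zi\|^2=\sum_{k}\Dj^2(z^i_k)^2$, summing the scalar bound over the $d_i$ coordinates of $\zi$ gives the first inequality (each $z^i_k$ is one coordinate of the standard Gaussian $\z\in\R^d$, and $f_j$ is $G$-Lipschitz), and summing over all $d$ coordinates gives $\E[D^2\|\z\|^2]\le 2\sqrt6 G^2 d$; the bound on $\E[\|\grad(t)\|^2]$, with $\grad(t)=(\grad^1(t),\ldots,\grad^n(t))$, then follows from the deterministic Cauchy--Schwarz estimate $\|\grad^i(t)\|^2\le\frac{|S|}{n^2}\sum_{j\in S}\|\Dj\zi\|^2$ for the at-most-$n$-term neighbor average $S=\{j:\tauijt\ge 0\}$, followed by summation over $i$. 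I would stress at the outset why the obvious route fails: the pointwise Lipschitz bound $|D(\z)|\le G\|\z\|$ only yields $\E[D^2 z_i^2]\le G^2\,\E[\|\z\|^2 z_i^2]=G^2(d+2)$, which is dimension-dependent and loses a factor of order $d$ against the claimed dimension-free constant $2\sqrt6 G^2$.

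The two structural facts that recover dimension independence are that $D(\cdot)$ is itself $G$-Lipschitz on $\R^d$ and that it is an odd function of $\z$. Lipschitzness is immediate because the $2u$ in the denominator cancels the $u$ in the perturbation: $|D(\z)-D(\z')|\le\frac{1}{2u}(Gu\|\z-\z'\|+Gu\|\z-\z'\|)=G\|\z-\z'\|$. Oddness, $D(-\z)=-D(\z)$, together with the symmetry $\z\sim-\z$, forces $\E[D(\z)]=0$. Thus $D$ is a \emph{centered} $G$-Lipschitz function of a standard Gaussian vector. From here I would use the Gaussian Poincar\'e inequality, $\E[D^2]=\mathrm{Var}(D)\le \E\|\nabla D\|^2\le G^2$, which is exactly the dimension-free second-moment bound the crude estimate could not produce. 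To bring in the extra $z_i^2$ factor I would apply Gaussian integration by parts to $\phi(\z)=z_i D(\z)^2$: using $\E[z_i\phi]=\E[\partial_i\phi]$ gives $\E[D^2 z_i^2]=\E[D^2]+2\,\E[z_i D\,\partial_i D]$, and since $|\partial_i D|\le\|\nabla D\|\le G$ almost everywhere, $2|\E[z_i D\,\partial_i D]|\le 2G\sqrt{\E[z_i^2]\,\E[D^2]}\le 2G^2$, so $\E[D^2 z_i^2]\le 3G^2\le 2\sqrt6\,G^2$, which establishes the lemma.

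The same structural facts also reproduce the displayed constant $2\sqrt6$ exactly through the more economical (if slightly looser) Cauchy--Schwarz route: $\E[D^2 z_i^2]\le\sqrt{\E[D^4]\,\E[z_i^4]}$, where $\E[z_i^4]=3$ and where, because the centered $G$-Lipschitz $D$ is sub-Gaussian with variance proxy $G^2$, one has a dimension-free fourth-moment bound $\E[D^4]\le 8G^4$, giving $\sqrt{8G^4\cdot 3}=2\sqrt6\,G^2$. The step I expect to be the crux in either route is precisely this passage from a pointwise Lipschitz control to a dimension-free moment bound: it is where the factor of $d$ is defeated, and it is won only through concentration of $D$ as a centered Lipschitz function of a Gaussian, not through $|D|\le G\|\z\|$. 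The one technical point to handle with care is that $D$ need only be Lipschitz (the $f_i$ are not assumed differentiable), so I would invoke Rademacher's theorem to guarantee $\nabla D$ exists almost everywhere with $\|\nabla D\|\le G$, and justify the integration-by-parts identity by the quadratic growth $|D|\le G\|\z\|$ and a standard mollification argument so that all Gaussian integrals converge.
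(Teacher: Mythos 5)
Your main argument is correct and self-contained, which is more than the paper itself offers: the paper gives no proof of this lemma, deferring entirely to Lemma 3 of the cited works of Tang et al., where (judging from the constant $2\sqrt{6}=\sqrt{3\cdot 8}$) the bound is obtained by Cauchy--Schwarz, $\E[D^2z_i^2]\le\sqrt{\E[D^4]\,\E[z_i^4]}$, together with a dimension-free fourth-moment bound on $D$. Your primary route via Stein's identity and the Gaussian Poincar\'e inequality --- exploiting that $D$ is a centered (odd) $G$-Lipschitz function of $\z$ --- is a genuinely different and cleaner path: it yields the sharper constant $3G^2\le 2\sqrt{6}G^2$, and it correctly identifies why the naive pointwise bound $|D|\le G\|\z\|$ loses a factor of order $d$. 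The reduction of the two ``in particular'' claims to the scalar estimate (coordinate-wise summation for the first, Cauchy--Schwarz over the at-most-$n$ received terms for the second) is also right, modulo the routine conditioning on the sigma-algebra generated by $\x(\tau_j^i(t))$ so that the scalar bound applies with $\x$ fixed --- a point the paper glosses over as well.

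One caveat: in your secondary ``alternative route'' you assert $\E[D^4]\le 8G^4$ for a centered $G$-Lipschitz function of a standard Gaussian. The standard tail-integration argument from $\Pr(|D|\ge t)\le 2e^{-t^2/(2G^2)}$ gives $\E[D^4]\le 16G^4$, not $8G^4$, and I do not see how to obtain $8G^4$ at this level of generality; with $16G^4$ the Cauchy--Schwarz route only yields $4\sqrt{3}\,G^2>2\sqrt{6}\,G^2$. This does not affect the validity of your proof, since the Stein/Poincar\'e route already establishes the lemma with room to spare, but you should either justify the $8G^4$ constant or drop the claim that this alternative ``reproduces the displayed constant exactly.''
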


Our next result quantifies the effect of the delay on our algorithm.
\begin{lemma}
\label{lemma:effect_of_delay}
    {Suppose that the assumptions of Theorem~\ref{Theorem1} holds. } Then, for any $i,j \in [n]$ and any $t\geq B$, we have 
    \begin{align*}
        &\E[\|\nbli\fuj(\x(t))-\nbli\fuj(\x(\tauijt))\|^2]\\
        &\qquad\qquad \leq2\sqrt{6}G^2L^2Bd\sum_{\tau=-B}^{-1}{\step^2(t+\tau)},
    \end{align*}
    and similarly, $\E[\|\nbli f(\x(t))-\nbli f(\x(\tauijt))\|^2]\leq 2\sqrt{6}G^2L^2Bd\sum_{\tau=-B}^{-1}{\step^2(t+\tau)}$.
\end{lemma}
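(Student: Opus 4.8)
The plan is to reduce the partial-gradient difference to a displacement of the decision variable and then control that displacement using the bounded delay together with the second-moment bound of Lemma~\ref{lemma:2norm_grad_bound}. Since Lemma~\ref{lemma:gradient_estimator} guarantees that $\fuj$ is $L$-smooth, its full gradient is $L$-Lipschitz; and because the partial gradient $\nbli\fuj$ is a sub-block of the full gradient (so its norm is at most that of the full gradient), I get the deterministic pointwise bound
\begin{align*}
\|\nbli\fuj(\x(t))-\nbli\fuj(\x(\tauijt))\|^2 \leq L^2\,\|\x(t)-\x(\tauijt)\|^2.
\end{align*}
Thus it suffices to bound $\E[\|\x(t)-\x(\tauijt)\|^2]$.

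Next I would telescope the displacement via the update rule~\eqref{eqn:maindyn}. Writing $s=\tauijt$ and stacking the per-agent updates into the concatenated dynamics, we obtain $\x(t)-\x(s)=\sum_{\ell=s}^{t-1}\step(\ell)\grad(\ell)$, a sum of $t-s$ vectors. Applying Cauchy--Schwarz (equivalently, convexity of the squared norm) and invoking Assumption~\ref{asm:delay}, which gives $t-s\leq B$, yields
\begin{align*}
\|\x(t)-\x(s)\|^2 \leq (t-s)\sum_{\ell=s}^{t-1}\step^2(\ell)\|\grad(\ell)\|^2 \leq B\sum_{\ell=s}^{t-1}\step^2(\ell)\|\grad(\ell)\|^2.
\end{align*}

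The one point that requires care---and what I expect to be the main obstacle---is that $s=\tauijt$ is itself a random index, possibly correlated with the gradient estimators, so I cannot simply push the expectation through the sum over the random index set $\{s,\dots,t-1\}$. To handle this cleanly I would exploit that the delay bound $s\geq t-B$ holds \emph{surely}: since every summand is non-negative, enlarging the random index set $\{s,\dots,t-1\}$ to the deterministic set $\{t-B,\dots,t-1\}$ can only increase the right-hand side. This removes all dependence on the random $s$, after which I take expectations and apply the uniform second-moment bound $\E[\|\grad(\ell)\|^2]\leq 2\sqrt{6}G^2 d$ from Lemma~\ref{lemma:2norm_grad_bound}, obtaining after the reindexing $\ell=t+\tau$
\begin{align*}
\E[\|\x(t)-\x(s)\|^2] \leq B\sum_{\ell=t-B}^{t-1}\step^2(\ell)\,\E[\|\grad(\ell)\|^2] \leq 2\sqrt{6}G^2Bd\sum_{\tau=-B}^{-1}\step^2(t+\tau).
\end{align*}
Combining this with the $L$-smoothness bound above produces exactly the claimed inequality, with constant $2\sqrt{6}G^2L^2Bd$.

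Finally, the second inequality for $\nbli f$ follows verbatim: $f=\frac1n\sum_i f_i$ is an average of $L$-smooth functions and is therefore itself $L$-smooth, so the identical chain of steps applies with $f$ in place of $\fuj$. Note that the hypothesis $t\geq B$ is precisely what ensures the index $t-B$ is nonnegative, so that the dynamics and the sum $\sum_{\tau=-B}^{-1}\step^2(t+\tau)$ are well defined.
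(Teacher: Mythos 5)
Your proof is correct and follows essentially the same route as the paper: reduce to $L^2\E[\|\x(t)-\x(\tauijt)\|^2]$ via $L$-smoothness of $\fuj$, expand the displacement over at most $B$ update steps, apply Cauchy--Schwarz, and invoke the second-moment bound of Lemma~\ref{lemma:2norm_grad_bound}. Your extra care in first enlarging the random index set $\{\tauijt,\dots,t-1\}$ to the deterministic window $\{t-B,\dots,t-1\}$ before taking expectations is a welcome point of rigor that the paper's proof leaves implicit, but it does not change the argument.
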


\begin{proof} 
Since $\fuj$ is $L$-smooth, we have
\begin{align*}
    &\E[\|\nbli\fuj(\x(t))-\nbli\fuj(\x(\tauijt))\|^2]\\
    \leq& L^2\E[\|\x(t)-\x(\tauijt)\|^2]\\
    \leq& L^2\E[(\sum_{\tau=-B}^{-1}{\|\step(t+\tau)\grad(t+\tau)\|})^2]\\
    \leq& L^2\sum_{\tau=-B}^{-1}{\step^2(t+\tau)}\sum_{\tau=-B}^{-1}{\E[\|\grad(t+\tau)\|^2]}\\
    \leq& 2\sqrt{6}G^2L^2Bd\sum_{\tau=-B}^{-1}{\step^2(t+\tau)}.
\end{align*}
Here, the penultimate inequality is due to the Cauchy-Schwartz inequality (3I in \cite{strang2006linear}), and the last inequality uses Lemma \ref{lemma:2norm_grad_bound}.
The second inequality follows similarly.
\end{proof}

Next, we introduce Lemma \ref{lemma:innerproduct_bound1}, \ref{lemma:innerproduct_bound2}, and \ref{lemma:innerproduct_bound3} to bound $\E [-\sum_{i=1}^n\langle \nbli f(\x(t)), \grad^i(t) \rangle]$.

\begin{lemma}
\label{lemma:innerproduct_bound1}
    Let $p(t)=\sqrt{\sum_{\tau=-B}^{-1}{\step^2(t+\tau)}}$. Then for any $t\geq B$, we have
    \begin{align*}
        &\E[-\frac{1}{n}\sum_{i,j}{\langle \nbli f(\x(t))-\nbli f(\x(\tauijt)),\Dj(\tauijt)\zi(\tauijt)\rangle}]\\
        &\leq 2\sqrt{6}G^2Ld\sqrt{nB}p(t).
    \end{align*}
\end{lemma}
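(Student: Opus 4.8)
The plan is to reduce the negated inner products to products of Euclidean norms via Cauchy-Schwarz and then invoke the two preceding second-moment estimates. First, for each pair $(i,j)$ the Cauchy-Schwarz inequality gives
\[
-\langle \nbli f(\x(t)) - \nbli f(\x(\tauijt)),\, \Dj(\tauijt)\zi(\tauijt)\rangle \leq \|\nbli f(\x(t)) - \nbli f(\x(\tauijt))\|\,\|\Dj(\tauijt)\zi(\tauijt)\|.
\]
Summing over $i,j$, dividing by $n$, taking expectation, and applying the Cauchy-Schwarz inequality for expectations term by term (namely $\E[XY]\le\sqrt{\E[X^2]\,\E[Y^2]}$), I would bound the left-hand side of the claim by
\[
\frac{1}{n}\sum_{i,j}\sqrt{\E[\|\nbli f(\x(t)) - \nbli f(\x(\tauijt))\|^2]\,\E[\|\Dj(\tauijt)\zi(\tauijt)\|^2]}.
\]

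Next I would substitute the two available bounds. Lemma~\ref{lemma:effect_of_delay} controls the first factor by $2\sqrt{6}G^2L^2Bd\,p^2(t)$ (recalling that $p^2(t)=\sum_{\tau=-B}^{-1}\step^2(t+\tau)$), while Lemma~\ref{lemma:2norm_grad_bound} controls the second by $2\sqrt{6}G^2 d_i$. Taking the square root of their product collapses the constant to a clean $2\sqrt{6}G^2$ and yields the per-term bound $2\sqrt{6}G^2 L\,p(t)\sqrt{B\,d\,d_i}$.

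It remains to carry out the outer summation. For each fixed $i$ there are at most $n$ admissible indices $j$ and every summand is nonnegative, so after the factor $1/n$ cancels I am left with $2\sqrt{6}G^2 L\,p(t)\sqrt{Bd}\sum_{i=1}^n\sqrt{d_i}$. A final application of Cauchy-Schwarz together with $\sum_{i=1}^n d_i = d$ gives $\sum_{i=1}^n\sqrt{d_i}\le\sqrt{n}\,\sqrt{\sum_{i=1}^n d_i}=\sqrt{nd}$, and since $\sqrt{Bd}\cdot\sqrt{nd}=d\sqrt{nB}$ this produces exactly the claimed bound $2\sqrt{6}G^2 L d\sqrt{nB}\,p(t)$.

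I do not expect a genuine difficulty here, as the argument is a chain of Cauchy-Schwarz applications resting on the two earlier lemmas. The only points that demand care are bookkeeping: pairing the block dimension $d_i$ (not the ambient dimension $d$) with the per-agent second-moment estimate of Lemma~\ref{lemma:2norm_grad_bound}, counting the $j$-sum correctly so that the extracted factor $n$ cancels the $1/n$ prefactor, and using $\sum_i\sqrt{d_i}\le\sqrt{nd}$ to convert the sum of block-dimension square roots into the target $d\sqrt{n}$ scaling.
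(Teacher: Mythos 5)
Your proof is correct and takes essentially the same approach as the paper: both decouple the cross term into the second-moment estimates of Lemma~\ref{lemma:effect_of_delay} and Lemma~\ref{lemma:2norm_grad_bound}. The paper does this via Young's inequality $ab\le\frac{1}{2}(a^2+b^2)$ with the weight $\sqrt{nB}Lp(t)$ chosen to balance the two sums, whereas your double Cauchy--Schwarz is just the optimized form of that same step and, after the $\sum_i\sqrt{d_i}\le\sqrt{nd}$ aggregation, lands on the identical constant $2\sqrt{6}G^2Ld\sqrt{nB}\,p(t)$.
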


\begin{proof} 
{Using $ab\leq \frac{1}{2}(a^2+b^2)$,} we have
\begin{align*}
    &\E[-\frac{1}{n}\sum_{i,j}{\langle \nbli f(\x(t))-\nbli f(\x(\tauijt)),\Dj(\tauijt)\zi(\tauijt)\rangle}]\\
    \leq& \frac{1}{2n}\frac{1}{\sqrt{nB}Lp(t)}\sum_{i,j}{\E[\|\nbli f(\x(t))-\nbli f(\x(\tauijt))\|^2]}\\
        &+\frac{1}{2n}\sqrt{nB}Lp(t)\sum_{i,j}{\E[\|\Dj(\tauijt)\zi(\tauijt)\|^2]}\\
    \leq& \frac{1}{2n}\frac{1}{\sqrt{nB}Lp(t)}\sum_{i,j} \left(2\sqrt{6}G^2L^2Bd\sum_{\tau=-B}^{-1}{\step^2(t+\tau)}\right)\\
        &+\frac{1}{2n}\sqrt{nB}Lp(t)\sum_{i,j}{2\sqrt{6}G^2d_i}= 2\sqrt{6}G^2Ld\sqrt{nB}p(t),
\end{align*}
{where the last inequality follows from Lemma~\ref{lemma:effect_of_delay} and Lemma~\ref{lemma:2norm_grad_bound}.} 
\end{proof}

Similar to Lemma \ref{lemma:innerproduct_bound1}, the following result is used to prove Lemma \ref{lemma:innerproduct_bound3}. 
\begin{lemma}
\label{lemma:innerproduct_bound2}
    For any $t\geq B$, we have
    \begin{align*}
        &\E[-\frac{1}{n}\sum_{i,j}\langle \nbli f(\x(\tauijt))-\nbli f(\x(t)),\\
        &\qquad\qquad \nbli\fuj(\x(\tauijt))-\Dj(\tauijt)\zi(\tauijt)\rangle]\\
        &\leq 2\sqrt{6}G^2Ld\sqrt{nB}p(t).
    \end{align*}
\end{lemma}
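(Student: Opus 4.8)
The plan is to mirror the two-step template used in the proof of Lemma~\ref{lemma:innerproduct_bound1}: apply a weighted Young's inequality to split each inner product into two squared-norm terms, then control each squared norm by an already-established lemma. The only genuinely new ingredient, relative to Lemma~\ref{lemma:innerproduct_bound1}, is bounding the second moment of the ``noise'' factor $\nbli\fuj(\x(\tauijt))-\Dj(\tauijt)\zi(\tauijt)$, which I handle using Lemma~\ref{lemma:gradient_estimator}.

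First I would apply $-\langle \boldsymbol{a},\boldsymbol{b}\rangle \leq \frac{1}{2c}\|\boldsymbol{a}\|^2+\frac{c}{2}\|\boldsymbol{b}\|^2$ termwise inside the double sum, with the scaling constant chosen as $c=\sqrt{nB}Lp(t)$, exactly as before. This produces an upper bound consisting of a first sum $\frac{1}{2n}\frac{1}{\sqrt{nB}Lp(t)}\sum_{i,j}\E[\|\nbli f(\x(\tauijt))-\nbli f(\x(t))\|^2]$ and a second sum $\frac{1}{2n}\sqrt{nB}Lp(t)\sum_{i,j}\E[\|\nbli\fuj(\x(\tauijt))-\Dj(\tauijt)\zi(\tauijt)\|^2]$. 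Since $\|\nbli f(\x(\tauijt))-\nbli f(\x(t))\|^2=\|\nbli f(\x(t))-\nbli f(\x(\tauijt))\|^2$, Lemma~\ref{lemma:effect_of_delay} bounds each summand in the first sum by $2\sqrt{6}G^2L^2Bd\,p(t)^2$, reproducing the situation in Lemma~\ref{lemma:innerproduct_bound1}.

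The key step is the second sum. Here I would use Lemma~\ref{lemma:gradient_estimator}: conditioning on the history up to time $\tauijt$ (so that $\x(\tauijt)$ is fixed and the fresh randomness is $\z(\tauijt)$), the estimator block $\Dj(\tauijt)\zi(\tauijt)$ has conditional mean equal to $\nbli\fuj(\x(\tauijt))$, the $i$-th block of $\nabla\fuj(\x(\tauijt))$. Hence $\nbli\fuj(\x(\tauijt))-\Dj(\tauijt)\zi(\tauijt)$ is the centering of $\Dj(\tauijt)\zi(\tauijt)$, and the variance-dominated-by-second-moment inequality $\E[\|X-\E[X]\|^2]\leq\E[\|X\|^2]$ gives $\E[\|\nbli\fuj(\x(\tauijt))-\Dj(\tauijt)\zi(\tauijt)\|^2]\leq\E[\|\Dj(\tauijt)\zi(\tauijt)\|^2]\leq2\sqrt{6}G^2d_i$, the last step being Lemma~\ref{lemma:2norm_grad_bound}. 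This matches the per-term bound used in the second sum of Lemma~\ref{lemma:innerproduct_bound1}.

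With both sums bounded by the same quantities as before, the remaining arithmetic is identical: using $\sum_{i,j}1=n^2$ in the first sum and $\sum_{i,j}d_i=nd$ in the second, each contribution simplifies to $\sqrt{6}G^2Ld\sqrt{nB}\,p(t)$, and their sum is the claimed $2\sqrt{6}G^2Ld\sqrt{nB}p(t)$. I expect the only delicate point to be the conditioning argument for the noise term, i.e.\ verifying that $\z(\tauijt)$ is independent of the history that determines $\x(\tauijt)$ so that Lemma~\ref{lemma:gradient_estimator} applies to the conditional expectation; once that measurability structure is granted, the rest of the proof is a direct transcription of the argument for Lemma~\ref{lemma:innerproduct_bound1}.
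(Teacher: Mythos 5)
Your proposal is correct and follows essentially the same route as the paper: the same weighted Young's inequality with weight $\sqrt{nB}Lp(t)$, the same use of Lemma~\ref{lemma:effect_of_delay} for the first sum, and the same observation that $\E[\Dj(\tauijt)\zi(\tauijt)\mid\Ft]=\nbli\fuj(\x(\tauijt))$ so that the centered second moment is bounded by the raw second moment from Lemma~\ref{lemma:2norm_grad_bound}. Your explicit attention to the conditioning/measurability of $\z(\tauijt)$ is a point the paper glosses over but does not change the argument.
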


\begin{proof}
Similar to the proof of Lemma \ref{lemma:innerproduct_bound1}, we have
\begin{align}
\label{eqn:proof_of_lemma:innerproduct_bound2}
    &\E[-\frac{1}{n}\sum_{i,j}\langle \nbli f(\x(\tauijt))-\nbli f(\x(t)),\\\nonumber
    &\qquad\qquad \nbli\fuj(\x(\tauijt))-\Dj(\tauijt)\zi(\tauijt)\rangle]\cr
    & \leq\frac{1}{2n}\frac{1}{\sqrt{nB}Lp(t)}\sum_{i,j}{\E[\|\nbli f(\x(t))-\nbli f(\x(\tauijt))\|^2]}\cr
        &+\frac{1}{2n}\sqrt{nB}Lp(t)\sum_{i,j}{\E[\|\Dj(\tauijt)\zi(\tauijt)-\nbli\fuj(\x(\tauijt))\|^2]}.\nonumber
\end{align}
Since $\E[\Dj(\tauijt)\zi(\tauijt)]=\nbli\fuj(\x(\tauijt))$, we have $\E[\|\Dj(\tauijt)\zi(\tauijt)-\nbli\fuj(\x(\tauijt)))\|^2]
\leq\E[\|\Dj(\tauijt)\zi(\tauijt)\|^2]$.
Finally, the proof of the lemma follows by using this inequality in (\ref{eqn:proof_of_lemma:innerproduct_bound2}) and following the same argument as of the proof of Lemma~\ref{lemma:innerproduct_bound1}.
\end{proof}

Now we can introduce our last lemma.

\begin{lemma}
\label{lemma:innerproduct_bound3}
    For any $t\geq B$, we have
    \begin{align*}
        &\E[-\frac{1}{n}\sum_{i,j}{\langle \nbli f(\x(\tauijt)),\Dj(\tauijt)\zi(\tauijt)\rangle}]\\
        &\qquad\qquad\leq-\frac{1}{3}\E[\|\nabla f(\x(t))\|^2]+4\sqrt{6}G^2Ld\sqrt{nB}p(t)\\
        &\qquad\qquad\qquad+\sqrt{6}nG^2L^2Bdp^2(t)+\frac{3}{2}u^2(t)L^2d.
    \end{align*}
\end{lemma}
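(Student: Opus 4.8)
The plan is to control the left-hand side by comparing the delayed, noisy estimator $\Dj(\tauijt)\zi(\tauijt)$ against the exact partial gradient of the smoothed cost $\nbli\fuj$ evaluated at the \emph{current} iterate $\x(t)$, leaning on two facts from the preceding lemmas: the near-unbiasedness $\E[\Dj(\tauijt)\zi(\tauijt)\mid \F_{\tauijt}]=\nbli\fuj(\x(\tauijt))$ (equation~(\ref{eqn:eqn1_of_lemma:gradient_estimator}) of Lemma~\ref{lemma:gradient_estimator}), and the averaging identity $\frac{1}{n}\sum_{j}\nbli\fuj=\nbli f^u$ together with the smoothing-bias bound (\ref{eqn:eqn2_of_lemma:gradient_estimator}). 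Concretely, for each pair $i,j$ I would add and subtract $\nbli f(\x(t))$ in the first argument and $\nbli\fuj(\x(t))$, $\nbli\fuj(\x(\tauijt))$ in the second, giving the identity
\begin{align*}
\langle \nbli f(\x(\tauijt)), \Dj(\tauijt)\zi(\tauijt)\rangle
&= \langle \nbli f(\x(t)), \nbli\fuj(\x(t))\rangle\\
&\quad + \langle \nbli f(\x(\tauijt))-\nbli f(\x(t)), \nbli\fuj(\x(\tauijt))\rangle\\
&\quad + \langle \nbli f(\x(t)), \nbli\fuj(\x(\tauijt))-\nbli\fuj(\x(t))\rangle\\
&\quad + \langle \nbli f(\x(\tauijt)), \Dj(\tauijt)\zi(\tauijt)-\nbli\fuj(\x(\tauijt))\rangle.
\end{align*}
Applying $-\frac{1}{n}\sum_{i,j}$ then splits the target into four groups $S_1,\dots,S_4$, one per line.

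The term $S_1$ is the source of the descent. By the averaging identity it collapses to $-\langle\nabla f(\x(t)),\nabla f^u(\x(t))\rangle$; writing $\nabla f^u=\nabla f+(\nabla f^u-\nabla f)$ and using $\|\nabla f-\nabla f^u\|\le u(t)L\sqrt d$ with Young's inequality at weight $\tfrac13$ gives $-\tfrac56\E\|\nabla f(\x(t))\|^2+\tfrac32 u^2(t)L^2 d$. The third group $S_3$ I would bound by Cauchy--Schwarz and $ab\le\tfrac12a^2+\tfrac12b^2$: the $\tfrac12a^2$ half sums over the blocks $i$ and over $j$ to exactly $\tfrac12\E\|\nabla f(\x(t))\|^2$, and the $\tfrac12b^2$ half is controlled in expectation by Lemma~\ref{lemma:effect_of_delay} (each of the $n^2$ summands at most $2\sqrt6G^2L^2Bd\,p^2(t)$), producing $\sqrt6 nG^2L^2Bd\,p^2(t)$. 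Adding $S_1$ and $S_3$, the two $\|\nabla f(\x(t))\|^2$ contributions combine as $-\tfrac56+\tfrac12=-\tfrac13$, yielding precisely $-\tfrac13\E\|\nabla f(\x(t))\|^2+\tfrac32 u^2(t)L^2d+\sqrt6 nG^2L^2Bd\,p^2(t)$.

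For the noise group $S_4$ I would argue that it vanishes in expectation: $\nbli f(\x(\tauijt))$ is $\F_{\tauijt}$-measurable (it is determined before the fresh Gaussian $\z(\tauijt)$ is drawn), so conditioning on $\F_{\tauijt}$ the bracket has zero conditional mean by near-unbiasedness, whence $\E[S_4]=0$. The displacement group $S_2$ I would handle exactly as in Lemmas~\ref{lemma:innerproduct_bound1}--\ref{lemma:innerproduct_bound2}: Cauchy--Schwarz followed by Young at weight $\sqrt{nB}Lp(t)$, with the displacement factor bounded by Lemma~\ref{lemma:effect_of_delay} and the other factor by $\|\nabla\fuj\|\le G$ (so that $\sum_i\|\nbli\fuj\|^2\le G^2$); this gives a term of order $G^2Ld\sqrt{nB}\,p(t)$, which I absorb, together with the leftover from $S_2$, into the stated $4\sqrt6G^2Ld\sqrt{nB}\,p(t)$.

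The step I expect to be the main obstacle is the vanishing of $S_4$. Because $\tauijt$ is itself random and, through the self-term $\tau_i^i(t)=t$ of the estimator, later iterates depend on the Gaussian drawn at the delayed time, the noise must be paired \emph{only} with the $\F_{\tauijt}$-measurable factor $\nbli f(\x(\tauijt))$ and never with $\nbli f(\x(t))$; this is exactly what dictates the particular add-and-subtract references chosen above. I would make this rigorous by conditioning on the delay schedule (treated as fixed, or independent of the estimation noise) and then taking expectation over it. The remaining effort is bookkeeping, namely the dimension counting ($\sum_i d_i=d$ and $\sum_i\|\nbli\fuj\|^2\le G^2$) and the coordinated choice of Young weights ($\tfrac13$ in $S_1$, $1$ in $S_3$, and $\sqrt{nB}Lp(t)$ in $S_2$) so the constants land exactly on $-\tfrac13$, $\tfrac32$, $\sqrt6 n$, and $4\sqrt6$.
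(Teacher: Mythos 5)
Your proof is correct and follows essentially the same route as the paper: the paper first applies the tower property to replace $\Dj(\tauijt)\zi(\tauijt)$ by $\nbli\fuj(\x(\tauijt))$ (your ``$\E[S_4]=0$'' step, stated with the same implicit assumption that the delays are independent of the estimation noise) and then performs exactly your add-and-subtract decomposition into displacement, drift, and smoothing-bias terms, with the same Young weights producing $-\tfrac56+\tfrac12=-\tfrac13$. The only cosmetic difference is the $S_2$ term, which the paper bounds by splitting $\nbli\fuj(\x(\tauijt))$ back into $\Dj(\tauijt)\zi(\tauijt)$ plus its deviation and invoking Lemmas~\ref{lemma:innerproduct_bound1} and~\ref{lemma:innerproduct_bound2} to get $4\sqrt6\,G^2Ld\sqrt{nB}\,p(t)$, whereas your direct bound via $\|\nabla\fuj\|\le G$ yields the slightly sharper $(\sqrt6+\tfrac12)G^2Ld\sqrt{nB}\,p(t)$ --- both fit the stated constant.
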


\begin{proof}
Let $\Ft=\sigma(\x(\tauijt))$ be the $\sigma$-algebra generated by $\x(\tauijt)$.
Then, {using the law of total expectation (Theorem 4.1.13 in \cite{durrett2019probability}),} we have
\begin{align}
\label{eqn:proof_of_lemma:innerproduct_bound3}
    &\E[-\frac{1}{n}\sum_{i,j}{\langle \nbli f(\x(\tauijt)),\Dj(\tauijt)\zi(\tauijt)\rangle}]\cr
    =&\E[\E[-\frac{1}{n}\sum_{i,j}{\langle \nbli f(\x(\tauijt)),\Dj(\tauijt)\zi(\tauijt)\rangle} | \Ft]]\cr
    =&\E[-\frac{1}{n}\sum_{i,j}{\langle \nbli f(\x(\tauijt)),\E[\Dj(\tauijt)\zi(\tauijt) | \Ft]\rangle}]\cr
    =&\E[-\frac{1}{n}\sum_{i,j}{\langle \nbli f(\x(\tauijt)),\nbli\fuj(x(\tauijt))\rangle}]\cr
    =&\E[-\frac{1}{n}\sum_{i,j}{\langle \nbli f(\x(\tauijt))-\nbli f(\x(t)),\nbli\fuj(x(\tauijt))\rangle}]\cr
    &+\E[-\frac{1}{n}\sum_{i,j}{\langle \nbli f(\x(t)),\nbli\fuj(x(\tauijt))-\nbli\fuj(x(t))\rangle}]\cr
    &+\E[-\langle \nabla f(\x(t)), \nabla f^u(\x(t))-\nabla f(\x(t)) \rangle]\cr
    &-\E[\|\nabla f(\x(t))\|^2].
\end{align}

To bound the first term in (\ref{eqn:proof_of_lemma:innerproduct_bound3}), we use Lemma \ref{lemma:innerproduct_bound1} and Lemma \ref{lemma:innerproduct_bound2} and get
\begin{align}
\label{eqn:term1_in_lemma:innerproduct_bound3}
    &\E[-\frac{1}{n}\sum_{i,j}{\langle \nbli f(\x(\tauijt))-\nbli f(\x(t)),\nbli\fuj(x(\tauijt))\rangle}]\cr
    =&\E[-\frac{1}{n}\!\sum_{i,j}{\langle \nbli f(\x(\tauijt))\!-\nbli f(\x(t)),\!\!\Dj(\tauijt)\zi(\tauijt)\rangle}]\cr
    &\quad +\E[-\frac{1}{n}\sum_{i,j}\langle \nbli f(\x(\tauijt))-\nbli f(\x(t)),\cr
    &\qquad \nbli\fuj(\x(\tauijt))-\Dj(\tauijt)\zi(\tauijt)\rangle]\nonumber\\
    &\qquad\qquad\leq4\sqrt{6}G^2Ld\sqrt{nB}p(t).
\end{align}

To bound the second term in (\ref{eqn:proof_of_lemma:innerproduct_bound3}), we use $ab\leq \frac{1}{2}(a^2+b^2)$ and Lemma \ref{lemma:effect_of_delay} and get
\begin{align}
\label{eqn:term2_in_lemma:innerproduct_bound3}
    &\E[-\frac{1}{n}\sum_{i,j}{\langle \nbli f(\x(t)),\nbli\fuj(x(\tauijt))-\nbli\fuj(x(t))\rangle}]\cr
    \leq& \frac{1}{2n}\E\left[\sum_{i,j}\|\langle \nbli f(\x(t))\|^2\right]\cr
    &+\frac{1}{2n}\E\left[\sum_{i,j}\|\nbli\fuj(x(\tauijt))-\nbli\fuj(x(t))\|^2\right]\nonumber\\
    \leq&\frac{1}{2}\E[\|\nabla f(\x(t))\|^2]+\sqrt{6}nG^2L^2Bdp^2(t).
\end{align}

To bound the third term in (\ref{eqn:proof_of_lemma:innerproduct_bound3}), we use $ab\leq \frac{1}{2}(a^2+b^2)$ Lemma \ref{lemma:gradient_estimator} and get
\begin{align}
\label{eqn:term3_in_lemma:innerproduct_bound3}
    &\E[-\langle \nabla f(\x(t)), \nabla f^u(\x(t))-\nabla f(\x(t)) \rangle]\cr
    \leq& \frac{1}{2}\E\left[\frac{1}{3}\E[\|\nabla f(\x(t))\|^2]+3\|\nabla f^u(\x(t))-\nabla f(\x(t))\|^2\right]\cr
    \leq& \frac{1}{6}\E[\|\nabla f(\x(t))\|^2]+\frac{3}{2}u^2(t)L^2d.
\end{align}

Plugging (\ref{eqn:term1_in_lemma:innerproduct_bound3}), (\ref{eqn:term2_in_lemma:innerproduct_bound3}), and (\ref{eqn:term3_in_lemma:innerproduct_bound3}) into (\ref{eqn:proof_of_lemma:innerproduct_bound3}), we get
\begin{align*}
    &\E[-\frac{1}{n}\sum_{i,j}{\langle \nbli f(\x(\tauijt)),\Dj(\tauijt)\zi(\tauijt)\rangle}]\\
    \leq& -\frac{1}{3}\E[\|\nabla f(\x(t))\|^2]+4\sqrt{6}G^2Ld\sqrt{nB}p(t)\\
    &+\sqrt{6}nG^2L^2Bdp^2(t)+\frac{3}{2}u^2(t)L^2d.
\end{align*}

\end{proof}

{Now we are ready to prove the main result. 

\begin{proof}[Proof of Theorem~\ref{Theorem1}]
Our algorithm~\eqref{eqn:maindyn} can be compactly written as $\x(t+1)=\x(t)-\step(t)\grad(t)$, 
where $\grad(t)$ is the $d$-dimensional vector that is obtained by concatenation of $\grad^1(t),\ldots,\grad^n(t)$ vectors.
Using $L$-smooth condition (Lemma 5.7 in \cite{beck2017first}) of $f$, we have
\begin{align*}
    &f(\x(t+1)) \leq f(\x(t))-\! \langle \nabla f(\x(t)), \step(t)\grad(t) \rangle\! +\frac{L}{2}\| \step(t)\grad(t)\|^2 \\
    &= f(\x(t))-\step(t)\sum_{i=1}^n{\langle \nbli f(\x(t)), \grad^i(t) \rangle}+\frac{L}{2}\step^2(t)\|\grad(t)\|^2.
\end{align*}
Taking the expected value of both sides, we have
\begin{align}
\label{eqn:expected_iter}
    &\E[f(\x(t+1))] \leq \E[f(\x(t))]\\\nonumber &-\step(t)\E [\sum_{i=1}^n\langle \nbli f(\x(t)), \grad^i(t) \rangle] +\frac{L}{2}\step^2(t)\E[\|\grad(t)\|^2].
\end{align}


From Lemma \ref{lemma:innerproduct_bound1} and Lemma \ref{lemma:innerproduct_bound3} we have
\begin{align*}
    &\E [-\sum_{i=1}^n{\langle \nbli f(\x(t)), \grad^i(t) \rangle}]\\
    =&\E[-\frac{1}{n}\sum_{i,j}{\langle \nbli f(\x(t))-\nbli f(\x(\tauijt), \Dj(\tauijt)\zi(\tauijt) \rangle}]\\
    &+\E[-\frac{1}{n}\sum_{i,j}{\langle \nbli f(\x(\tauijt), \Dj(\tauijt)\zi(\tauijt) \rangle}]\\
    \leq & -\frac{1}{3}\E[\|\nabla f(\x(t))\|^2]+6\sqrt{6}G^2Ld\sqrt{nB}p(t)\\
    &+\sqrt{6}nG^2L^2Bdp^2(t)+\frac{3}{2}u^2(t)L^2d.
\end{align*}
Using the above inequality and Lemma \ref{lemma:2norm_grad_bound}, (\ref{eqn:expected_iter}) becomes
\begin{align*}
    &\E[f(\x(t+1))] \leq \E[f(\x(t))]-\frac{1}{3}\step(t)\E[\|\nabla f(\x(t))\|^2]\\
    &+6\sqrt{6}G^2Ld\sqrt{nB}\step(t)p(t)+\sqrt{6}nG^2L^2Bd\step(t)p^2(t)\\
    &+\frac{3}{2}u^2(t)L^2d\step(t)+\sqrt{6}G^2Ld\step^2(t).
\end{align*}
By taking the telescoping sum, we get
\begin{align}
\label{eqn:telesum}
    &\sum_{t=B}^T \step(t)\E[\| \nabla f(\x(t))\|^2] \cr
    \leq& 3\E[f(\x(B))-f^*]+18\sqrt{6}G^2Ld\sqrt{nB}\sum_{t=B}^T\step(t)p(t) \cr
    &+3\sqrt{6}nG^2L^2Bd\sum_{t=B}^T\step(t)p^2(t) \cr
    &+\frac{9}{2}L^2d\sum_{t=B}^Tu^2(t)\step(t)+3\sqrt{6}G^2Ld\sum_{t=B}^T\step^2(t).
\end{align}
Since $f$ is $G$-Lipschitz, we have
\begin{align*}
    \E[f(\x(B))] &\leq f(\x_0)+G\E[\|\x(B)-\x(0)\|] \\
    &\leq f(\x_0)+G\sum_{t=0}^{B-1}{\step(t)\E[\|\grad(t)\|]}\\
    &\leq f(\x_0)+\sqrt{2\sqrt{6}}G^2\sqrt{d}\sum_{t=0}^{B-1}{\step(t)}:=C_0.
\end{align*}
Therefore, using \eqref{eqn:telesum} and letting $S(T):=\sum_{t=B}^T \step(t)\E[\| \nabla f(\x(t))\|^2]$, we have
\begin{align*}
    S(T) \leq& 18\sqrt{6}G^2Ld\sqrt{nB}\sum_{t=B}^T\step(t)p(t) \cr
    &+3\sqrt{6}nG^2L^2Bd\sum_{t=B}^T\step(t)p^2(t) \cr
    &+\frac{9}{2}L^2d\sum_{t=B}^Tu^2(t)\step(t)+3\sqrt{6}G^2Ld\sum_{t=B}^T\step^2(t)+C_1,
\end{align*}
where $C_1=3(C_0-f^*)$ is a constant.

Since $\step(t)=\frac{\step_0}{(t+1)^\alpha}$ and $u(t)=\frac{u_0}{(t+1)^\beta}$, we have ${p(t)=\Oc(\frac{1}{t^\alpha})}$. Therefore, using Riemann sum approximation~\cite{keisler2013elementary}, we have
\begin{align}\label{eqn:ST}
    S(T)=
    \begin{cases}
    \Oc(T^{1-2\alpha}+T^{1-\alpha-2\beta}) \\
    \qquad\qquad\qquad \text{if } 0 < \alpha < \frac{1}{2} \text{ and } \alpha+2\beta < 1,\\
    \Oc(T^{1-2\alpha}) 
    \qquad \text{ if } 0 < \alpha < \frac{1}{2} \text{ and } \alpha+2\beta \geq 1,\\
    \Oc(T^{1-\alpha-2\beta}) \quad \text{ if } \frac{1}{2} \leq \alpha < 1 \text{ and } \alpha+2\beta < 1,\\
    \Oc(\log T) \quad\qquad \text{if } \alpha=\frac{1}{2} \text{ and } \beta \geq \frac{1}{4}, \\
    \qquad\qquad\qquad\quad \text{ or } \frac{1}{2} < \alpha < 1 \text{ and } \alpha+2\beta = 1,\\
    \Oc(1) \qquad\qquad \text{ if } \frac{1}{2} < \alpha < 1 \text{ and } \alpha+2\beta > 1.
    \end{cases} 
    \end{align}

Next, we establish an upper bound for 
\[M(T):=\frac{1}{T-B+1}\sum_{t=B}^T \E[\| \nabla f(x(t))\|^2].\]

For any $\theta \in (0,1)$,  define
\begin{align*}
    M_\theta(T)=\left[\frac{1}{T-B+1}\sum_{t=B}^T \left(\E[\| \nabla f(x(t))\|^2]\right)^\theta\right]^\frac{1}{\theta}.
\end{align*}
Note that by H\"older's inequality (Theorem 6.2 in \cite{folland1999real}), for any $p,q>1$ with ${\frac{1}{p}+\frac{1}{q}=1}$, and non-negative sequences $\{a_t\}_{t=1}^T$ and $\{b_t\}_{t=1}^T$, we have
\begin{align}\label{eqn:holder}
    \left(\sum_{t=1}^Ta_tb_t\right)^q \leq \left(\sum_{t=1}^T{a_t}^p\right)^{\frac{q}{p}}\sum_{t=1}^T{b_t}^q.
\end{align}
Let 
\begin{align*}
    a_t&=\left( \frac{1}{\step(t)} \right)^\theta=\frac{1}{{\step_0}^\theta}(t+1)^{\alpha\theta},\cr
    b_t&=\left(\step(t)\E[\| \nabla f(x(t))\|^2]\right)^\theta,
\end{align*}
and $(p,q)=\left(\frac{1}{1-\theta},\frac{1}{\theta}\right)$. Noting  $\sum_{t=B}^Tb_t^{\frac{1}{\theta}}=S(T)$, and using~\eqref{eqn:holder}, we have
\begin{align*}
    &M_\theta(T)=\left(\frac{1}{T-B+1}\sum_{t=B}^T{a_tb_t} \right)^\frac{1}{\theta}\\
    &\leq (T-B+1)^{-\frac{1}{\theta}}\left(\sum_{t=B}^T a_t ^\frac{1}{1-\theta} \right)^\frac{1-\theta}{\theta} \sum_{t=B}^T b_t ^\frac{1}{\theta}\\
    &=(T-B+1)^{-\frac{1}{\theta}}\left(\sum_{t=B}^T {\step_0}^{-\frac{\theta}{1-\theta}}(t+1)^\frac{\alpha\theta}{1-\theta} \right)^\frac{1-\theta}{\theta}S(T)\\
    &=\Oc(T^{-\frac{1}{\theta}}\left(T^{\frac{\alpha\theta}{1-\theta}+1}\right)^\frac{1-\theta}{\theta}S(T))
    =\Oc(T^{\alpha-1}S(T)).
\end{align*}
Using a similar argument as in the proof of Proposition 1 in \cite{reisizadeh2022dimix}, it can be shown that for any $\theta \in(0,1)$
\begin{align*}
    \sum_{t=B}^T \E[\| \nabla f(x(t))\|^2] \leq 
    \left[\sum_{t=B}^T \left(\E[\| \nabla f(x(t))\|^2]\right)^\theta\right]^\frac{1}{\theta}.
\end{align*}
Now for a given $\eps>0$, if we let $\theta=\frac{1}{1+\eps/2}$, we have
\begin{align*}
    &M(T) \leq (T-B+1)^{\frac{1}{\theta}-1}M_\theta(T)=(T-B+1)^\frac{\eps}{2} M_\theta(T)\\
    &=\Oc(T^{\alpha-1+\frac{\eps}{2}}S(T)).
\end{align*}
Therefore, using~\eqref{eqn:ST}, if $0<\eps/2<\min\{\alpha,2\beta,1-\alpha\}$, we have $M(T)\rightarrow 0$ as $T\rightarrow\infty$ for $0<\alpha<1$ and $\beta>0$. 

Finally, since the sum of the first $B$ terms is finite, we have $\lim_{T\to\infty}\frac{1}{T+1}\sum_{t=0}^{T}{\E[\|\nabla f(\x(t))\|^2]}=0$
and for a special case of $\alpha=\frac{1}{2}$ and $\beta=\frac{1}{4}$, we have 
\begin{align*}
    \frac{1}{T+1}\sum_{t=0}^{T}{\E[\|\nabla f(\x(t))\|^2]}&=\Oc(\log(T)T^{-(\frac{1}{2}-\frac{\eps}{2})})\cr 
    &=\Oc(T^{-\frac{1}{2}+\eps}).
\end{align*}
\end{proof}}

\section{Conclusion}

We studied a zeroth order cooperative multi-agent optimization problem with communication delay. We showed that for a class of diminishing smoothing radius and step sizes if the time delay between any two agents is uniformly bounded, the objective functions are $L$-smooth, Lipschitz, then our algorithm will converge to a first-order stationary point of the underlying problem, in particular, the established convergence rates are robust to the objective functions' smoothness parameters.



\bibliographystyle{ieeetr}
\bibliography{bib}

\end{document}